\definecolor{verylight}{gray}{0.97}
\definecolor{light}{gray}{0.9}
\definecolor{medium}{gray}{0.85}
\def\frk{\mathfrak}               
\def\Phi{{\frk N}}
\def\opn#1#2{\def#1{\operatorname{#2}}} 
\opn\chara{char} \opn\length{\ell} \opn\pd{pd} \opn\rk{rk}
\opn\projdim{proj\,dim} \opn\injdim{inj\,dim} \opn\rank{rank}
\opn\depth{depth} \opn\grade{grade} \opn\height{height} \opn\bheight{bigheight}
\opn\embdim{emb\,dim} \opn\codim{codim}
\opn\Tr{Tr} \opn\bigrank{big\,rank}
\opn\superheight{superheight}\opn\lcm{lcm}
\opn\trdeg{tr\,deg}
\opn\reg{reg} \opn\lreg{lreg} \opn\ini{in} \opn\lpd{lpd}
\opn\size{size}\opn{\mult}{mult}\opn{\lex}{lex}
\opn\div{div} \opn\Div{Div} \opn\cl{cl} \opn\Cl{Cl}
\opn\Spec{Spec} \opn\Supp{Supp} \opn\supp{supp} \opn\Sing{Sing}
\opn\Ass{Ass} \opn\Min{Min}
\opn\Ann{Ann} \opn\Rad{Rad} \opn\Soc{Soc}
\opn\Syz{Syz} \opn\Im{Im} \opn\Ker{Ker} \opn\Coker{Coker}
\opn\Am{Am} \opn\Hom{Hom} \opn\Tor{Tor} \opn\Ext{Ext}
\opn\End{End} \opn\Aut{Aut} \opn\id{id} \opn\ini{in}
\opn\nat{nat}
\opn\pff{pf}
\opn\Pf{Pf} \opn\GL{GL} \opn\SL{SL} \opn\mod{mod} \opn\ord{ord}
\opn\Gin{Gin}
\opn\Hilb{Hilb}\opn\adeg{adeg}\opn\std{std}\opn\ip{infpt}
\opn\Pol{Pol}
\opn\sat{sat}
\opn\Var{Var}
\opn\Gen{Gen}
\opn\indmatch{indmatch}
\opn\aff{aff} \opn\con{conv} \opn\relint{relint} \opn\st{st}
\opn\lk{lk} \opn\cn{cn} \opn\core{core} \opn\vol{vol}
\opn\link{link} \opn\star{star}
\opn\gr{gr}
\def\Sc{{\mathcal S}}
\def\pot#1#2{#1[\kern-0.28ex[#2]\kern-0.28ex]}
\opn\dirlim{\underrightarrow{\lim}}
\opn\inivlim{\underleftarrow{\lim}}
\let\union=\cup
\let\sect=\cap
\let\Union=\bigcup
\let\to=\rightarrow
\def\Implies{\ifmmode\Longrightarrow \else
        \unskip${}\Longrightarrow{}$\ignorespaces\fi}
\def\implies{\ifmmode\Rightarrow \else
        \unskip${}\Rightarrow{}$\ignorespaces\fi}
\def\iff{\ifmmode\Longleftrightarrow \else
        \unskip${}\Longleftrightarrow{}$\ignorespaces\fi}
\newtheorem{Theorem}{Theorem}[section]
\newtheorem{Lemma}[Theorem]{Lemma}
\newtheorem{Corollary}[Theorem]{Corollary}
\newtheorem{Proposition}[Theorem]{Proposition}
\newtheorem{Conjecture}[Theorem]{Conjecture}
\let\epsilon\varepsilon
\let\phi=\varphi
\let\kappa=\varkappa
\def\qed{\ifhmode\textqed\fi
      \ifmmode\ifinner\quad\qedsymbol\else\dispqed\fi\fi}
\def\textqed{\unskip\nobreak\penalty50
       \hskip2em\hbox{}\nobreak\hfil\qedsymbol
       \parfillskip=0pt \finalhyphendemerits=0}
\def\dispqed{\rlap{\qquad\qedsymbol}}
\opn\dis{dis}
\def\pnt{{\raise0.5mm\hbox{\large\bf.}}}
\opn\Lex{Lex}
\newcommand{\inD}[1][\relax]{\def\argone{#1}\def\temprelax{\relax}
  \ifx\argone\temprelax\right.\else\,\middle|#1\right.{}\fi}
\newif\ifbinary
\begin{document}

\title{On the regularity of binomial edge ideals}

\author{Viviana Ene, Andrei Zarojanu}

\address{Viviana Ene, Faculty of Mathematics and Computer Science, Ovidius University, Bd.\ Mamaia 124,
 900527 Constanta, Romania
 \newline
 Simion Stoilow Institute of Mathematics of Romanian Academy, Research group of the project  ID-PCE-2011-1023,
 P.O.Box 1-764, Bucharest 014700, Romania} \email{vivian@univ-ovidius.ro}

\address{Andrei Zarojanu, Simion Stoilow Institute of Mathematics of Romanian Academy, Research group of the project  ID-PCE-2011-1023,
 P.O.Box 1-764, Bucharest 014700, Romania} \email{andrei\_zarojanu@yahoo.com} 
\thanks{The  authors were supported by the grant UEFISCDI,  PN-II-ID-PCE- 2011-3-1023.}

\begin{abstract}
We study the regularity of binomial edge ideals. For a closed graph $G$ we show that the regularity of the binomial edge ideal $J_G$ coincides with the regularity of 
$\ini_{\lex}(J_G)$  and  can be expressed in terms of the combinatorial data of $G.$ In addition, we give positive answers to Matsuda-Murai conjecture \cite{MM}
for some classes of graphs.
\end{abstract}
\maketitle

\section*{Introduction}

Binomial edge ideals were introduced in \cite{HHHKR} and \cite{Oh}. They are a generalization of the classical determinantal ideal generated by the $2$-minors
of a $2\times n$--matrix  of indeterminates. 

Let $S=K[x_1,\ldots,x_n,y_1,\ldots,y_n]$ be the polynomial ring in $2n$ variables over a field $K$. For $1\leq i < j\leq n$ we set $f_{ij}=x_iy_j-x_jy_i.$ Thus, $f_{ij}$ is the 
$2$-minor determined by the columns $i$ and $j$ in the $2\times n$--matrix $X$ with  rows $x_1,\ldots,x_n$ and $y_1,\ldots, y_n$.  Let $G$ be a simple graph on 
$[n]$ with edge set $E(G)$. The {\em binomial edge ideal} $J_G$ associated with $G$ is the ideal of $S$ generated by all the binomials $f_{ij}$ with $\{i,j\}\in 
E(G).$ For example, if $G$ is the complete graph on the vertex set $[n],$ then $J_G$ is equal to the ideal $I_2(X)$ of the maximal minors of $X$. 

In the last years, many properties  of binomial edge ideals have been studied in relation to the combinatorial data of the underlying graph and  applications to 
statistics have been investigated; see \cite{AR}, \cite{CR}, \cite[Chapter 6]{EH}, \cite{EHH}, \cite{HHHKR}, 
 \cite{RR},  \cite{Sara}, \cite{SZ},  \cite{Z}.

The Gr\"obner basis of a binomial edge ideal with respect to the lexicographic order  induced by $x_1>\cdots >x_n>y_1>\cdots> y_n$ was computed in \cite{HHHKR}. It 
turned out that this Gr\"obner basis is quadratic if and only if the graph $G$ is closed with respect to the given labeling which means that it satisfies the following 
condition: whenever $\{i,j\}$ and $\{i,k\}$ are edges of $G$ and either $i<j,\ i<k$ or $i>j,\ i>k,$ then $\{j,k\}$ is also an edge of $G.$ One calls a graph $G$ 
closed if it is closed with respect to some labeling of its vertices. Any closed graph is chordal and claw-free; see \cite{HHHKR}. In \cite{CR}, the authors show that actually $J_G$ has a quadratic Gr\"obner basis for some 
monomial order if and only if $G$ is closed. Combinatorial characterizations of closed graphs are given in \cite{CE} and \cite{EHH}. 

The regularity of binomial edge ideals has been studied in \cite{MM} and \cite{Sara}. In \cite[Theorem 1.1]{MM} it was proved that if $G$ is a connected graph, 
then $\ell\leq\reg(S/J_G)\leq n-1$ where $\ell$ denotes the length of the longest induced path in $G.$ We show in Theorem~\ref{reg} that if, in addition, $G$ is 
closed, then $\reg(S/J_G)=\reg(S/\ini_{\lex}(J_G))=\ell.$ In particular, it follows that the regularity of $J_G$ and $\ini_{\lex}(J_G)$ do not depend on the characteristic of the base 
field.

In \cite{MM}, the authors conjectured that if $G$ is connected, then  $\reg{S/J_G}=n-1$ if and only if $G$ is a line graph. It  straightforwardly follows from Theorem~\ref{reg} that this conjecture is true for closed graphs. 

In \cite{Sara2} it is conjectured that $\reg(S/J_G)\leq r$ where $r$ is the number of maximal cliques of $G.$ It is obvious that this conjecture follows from 
\cite[Theorem 1.1]{MM} in case that $r\geq n-1.$ Therefore, one should look at the case when $r\leq n-1.$ This is the case, for instance, if $G$ is chordal.

We give a positive answer to Madani-Kiani  conjecture \cite{Sara2} for a class of chordal graphs  which includes trees; see Theorem~\ref{regchordal}. In particular, we 
derive that Matsuda-Murai conjecture \cite{MM} holds for trees; see Corollary~\ref{corolarMM}. Moreover, this implies, in particular that, for chordal graphs, 
Matsuda-Murai conjecture follows from Madani-Kiani conjecture. Indeed, let us assume that the latter conjecture is true and  that $G$ is a chordal graph 
such that $\reg(S/J_G)=n-1$. This implies that $G$ has $n-1$ cliques, that is, $G$ is a tree with maximal regularity. By Corollary~\ref{corolarMM} it follows that 
$G$ must be a line graph.

\section{Preliminaries}

In this section we  review notation and  fundamental results on binomial edge ideals that will be used in the next sections.
\subsection{Closed graph and its clique complex. }
Let $G$ be a simple graph on the vertex set $[n]$ and $S=K[x_1,\ldots,x_n,y_1,\ldots,y_n]$ the polynomial ring in $2n$ indeterminates over a field $K$ endowed with the lexicographic order  induced by $x_1>\cdots >x_n>y_1>\cdots >y_n.$ Let $J_G=(f_{ij}: \{i,j\}\in E(G))\subset S$ be the associated binomial edge 
ideal of $G$.  It is obvious that, for computing the regularity of $S/J_G,$ we may assume without loss of generality that $G$ has no isolated vertex. According 
to \cite{HHHKR}, $G$ is called {\em closed  with respect to its given labeling} if its generators form a Gr\"obner basis with respect to the lexicographic order 
induced by the natural ordering of the indeterminates. $G$ is {\em closed} if it possesses a labeling of the vertices with respect to which is closed. In \cite{HHHKR} it is shown that any closed graph is chordal and claw--free.  In
\cite[Theorem 2.2]{EHH} it is shown that $G$ is closed if and only if there exists a labeling of $G$ such that all the facets of the clique complex $\Delta(G)$ 
of $G$ are intervals $[a,b]\subset [n].$ This means, in particular, that if $\{i,j\}\in E(G)$, then for any $i\leq  k< \ell \leq j$, $\{k,\ell\}\in E(G).$ 
Indeed, if $F$ is a facet of $\Delta(G)$ which contains $\{i,j\}$, then $F$ contains all the edges $\{k,\ell\}$ where $i\leq  k< \ell \leq j$. 

Let $G$ be  a closed graph. Then $\ini_{\lex}(J_G)=(x_iy_j: \{i,j\}\in E(G))$ is the edge ideal of a bipartite graph on the vertex set 
$\{x_1,\ldots,x_n\}\cup\{y_1,\ldots,y_n\}.$ Inspired by the notation in \cite{Sara}, we let $\ini_{\lex}(G)$ to be this bipartite graph. Therefore, we have 
$\ini_{\lex}(J_G)=I(\ini_{\lex}(G)).$

\subsection{Minimal primes of a binomial edge ideal.} Let  $\Sc\subset [n]$ and $G_1,\ldots,G_{c(\Sc)}$ be the connected components of the induced graph of $G$ 
on  the vertex set $[n]\setminus \Sc.$ For each $G_i$, let $\tilde{G}_i$ to be the complete graph on the vertex set of $G_i.$ Then, by \cite[Section 3]{HHHKR}, 
$$P_{\Sc}(G)=(\{x_i,y_i\}_{i\in \Sc}, J_{\tilde{G}_1},\ldots,J_{\tilde{G}_{c(\Sc)}})$$ is a prime ideal   for any subset 
$\Sc\subset [n]$, and $$J_G=\bigcap_{\Sc\subset [n]}P_{\Sc}(G).$$ Moreover, if $G$ is connected, then the minimal primes of $J_G$ are those ideals $P_{\Sc}(G)$ 
which correspond to the sets with the cut-point property \cite[Corollary 3.9]{HHHKR}. We say that $\Sc$ has the cut-point property or, simply, $\Sc$ is a {\em cut-point set} if 
$\Sc=\emptyset$ or $\Sc\neq \emptyset$ and $c(\Sc\setminus\{i\})< c(\Sc)$ for every $i\in \Sc.$

\section{Regularity}
\label{regsection}

We recall that, for an arbitrary simple graph $H,$ $\indmatch(H)$ is the number of edges in a largest induced matching of $H.$ By an {\em induced matching} we mean an induced 
subgraph of $H$ which consists of pairwise disjoint edges. Note that $\indmatch(H)$ is actually the monomial grade of the edge ideal $I(H)$ which is 
 the maximum length of
a regular sequence of monomials in $I(H)$. 

A graph $H$ is called {\em weakly chordal} if every induced cycle in $H$ and $\overline{H}$ (the complementary graph of $H$) has length at most $4.$

We recall the following theorem in \cite{W}.

\begin{Theorem}[\cite{W}]\label{Wood}
If $H$ is a weakly chordal graph on the vertex set $[n],$ then $$\reg(K[x_1,\ldots,x_n]/I(H))=\indmatch(H).$$
\end{Theorem}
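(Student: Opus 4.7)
The plan is to establish equality by proving the two inequalities $\reg(K[x_1,\dots,x_n]/I(H))\geq \indmatch(H)$ and $\reg(K[x_1,\dots,x_n]/I(H))\leq \indmatch(H)$ separately; only the second uses weak chordality.

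For the lower bound, which holds for an arbitrary simple graph, I would use Hochster's formula on the independence complex $\Delta = \Delta(H)$ (whose Stanley--Reisner ideal is $I(H)$). Let $\{e_1,\dots,e_r\}$ be an induced matching with $r=\indmatch(H)$, and let $W$ be its $2r$-vertex set. The restriction $\Delta|_W$ is the independence complex of $r$ disjoint edges, which is the join of $r$ two-point discrete spaces and is therefore homotopy equivalent to the sphere $S^{r-1}$. Hochster's formula
\[
\beta_{i,j}\bigl(K[x_1,\dots,x_n]/I(H)\bigr)=\sum_{W\subseteq [n],\ |W|=j}\dim_K \tilde H_{j-i-1}(\Delta|_W;K)
\]
then forces $\beta_{r,2r}\neq 0$ (take $j=2r$, $i=r$), whence $\reg\geq 2r-r=r$.

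For the upper bound, I would argue by induction on $|V(H)|$, using the short exact sequence
\[
0\to S/(I(H):x_v)\To S/I(H)\To S/(I(H),x_v)\to 0,
\]
which yields $\reg(S/I(H))\leq \max\{\reg(S/(I(H):x_v))+1,\ \reg(S/(I(H),x_v))\}$. A direct computation identifies $(I(H),x_v)$ with $I(H\setminus v)$ after quotienting by $x_v$, and $(I(H):x_v)$ with $I(H\setminus N[v])+(x_u: u\in N(v))$; so after adjoining the free variable $x_v$ (which does not change regularity), both sides are regularities of edge ideals of induced subgraphs of $H$. Since any induced subgraph of a weakly chordal graph is weakly chordal, the induction hypothesis applies to both terms.

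The crux is then to select $v$ so that $\indmatch(H\setminus N[v])\leq \indmatch(H)-1$ (the bound $\indmatch(H\setminus v)\leq \indmatch(H)$ is automatic). Here I would invoke Hayward's two-pair theorem: every weakly chordal graph that is not a clique contains a two-pair, namely a pair of non-adjacent vertices joined only by induced paths of length $2$. Choosing $v$ as one endpoint of an edge in a maximum induced matching, and using the two-pair structure to guarantee that no matching edge disjoint from $v$ lies near $N[v]$, one shows that any induced matching of $H\setminus N[v]$ could be extended in $H$ by the edge at $v$, forcing $\indmatch(H\setminus N[v])\leq r-1$. Combining with the inductive hypothesis gives $\reg(S/I(H))\leq \max\{(r-1)+1,\ r\}=r=\indmatch(H)$. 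The hard part will be precisely this combinatorial extension argument: the two-pair machinery is delicate and an alternative, cleaner route is to factor the upper bound through the co-chordal cover number $\mathrm{cochord}(H)$, proving $\reg(S/I(H))\leq \mathrm{cochord}(H)$ via Fr\"oberg's theorem and a Mayer--Vietoris splitting, and then establishing $\mathrm{cochord}(H)=\indmatch(H)$ for weakly chordal $H$ by a separate induction driven by the same two-pair structure.
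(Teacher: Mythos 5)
First, note that the paper offers no proof of this statement: it is quoted verbatim from Woodroofe \cite{W}, so there is nothing internal to compare against. Judged on its own terms, your lower bound is correct and complete in outline: the restriction of the independence complex to the vertex set of an induced matching of size $r$ is a join of $r$ copies of $S^0$, hence a sphere $S^{r-1}$, and Hochster's formula gives $\beta_{r,2r}\neq 0$ and $\reg\geq r$ (this is Katzman's argument and does not need weak chordality).

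The upper bound, however, has a genuine gap. Your primary route hinges on the claim that choosing $v$ to be an endpoint of an edge of a maximum induced matching forces $\indmatch(H\setminus N[v])\leq \indmatch(H)-1$, the point being that ``any induced matching of $H\setminus N[v]$ could be extended in $H$ by the edge at $v$.'' This is false as stated: for $H=P_4$ with vertices $1\text{--}2\text{--}3\text{--}4$, the edge $\{1,2\}$ is a maximum induced matching, yet with $v=1$ one has $H\setminus N[1]=\{3,4\}$ with the edge $\{3,4\}$, so $\indmatch(H\setminus N[1])=1=\indmatch(H)$; the matching $\{3,4\}$ cannot be extended by $\{1,2\}$ because $2\sim 3$. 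So the reduction requires a much more careful (and unspecified) choice of $v$, and you have supplied no argument that a good $v$ always exists in a weakly chordal graph --- you yourself flag this as ``the hard part,'' which means the proof is not done. Your fallback route is in fact Woodroofe's actual proof: $\reg(S/I(H))\leq\mathrm{cochord}(H)$ via Fr\"oberg's theorem and a Mayer--Vietoris/splitting argument, combined with the min--max theorem that $\mathrm{cochord}(H)=\indmatch(H)$ for weakly chordal $H$ (due to Busch, Dragan and Sritharan, proved by two-pair co-contraction, which preserves weak chordality --- a quite different operation from the vertex deletion you propose). But both of these ingredients are the entire substance of the theorem and are merely asserted in your sketch, so the proposal does not constitute a proof.
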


\begin{Theorem}\label{reg}
Let $G$ be a closed graph on the vertex set $[n]$ with the connected components $G_1,\ldots,G_r.$ Then 
\[
\reg(S/G_G)=\reg(S/\ini_<(J_G))) =\ell_1+\cdots+ \ell_r,
\]
where, for $1\leq i\leq r,$ $\ell_i$ is the length of the longest induced path of $G_i.$
\end{Theorem}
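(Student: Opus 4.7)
The plan is to reduce to the connected case and then to squeeze both regularities between $\ell$. For the reduction, if $G = G_1 \sqcup \cdots \sqcup G_r$, then $S/J_G$ decomposes as a tensor product over $K$ of the rings $S_i/J_{G_i}$ supported in disjoint sets of variables, and the analogous decomposition holds for $S/\ini_{\lex}(J_G)$. Since regularity is additive under such tensor products over $K$, it suffices to prove the theorem for a connected closed graph $G$, showing $\reg(S/J_G) = \reg(S/\ini_{\lex}(J_G)) = \ell$, where $\ell$ is the length of the longest induced path of $G$.

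Assuming $G$ connected, the Matsuda--Murai bound \cite[Theorem 1.1]{MM} yields $\reg(S/J_G) \geq \ell$, and the inequality $\reg(S/J_G) \leq \reg(S/\ini_{\lex}(J_G))$ is automatic. It therefore suffices to prove $\reg(S/\ini_{\lex}(J_G)) \leq \ell$, which will force all three quantities to coincide. Since $\ini_{\lex}(J_G) = I(\ini_{\lex}(G))$ is the edge ideal of the bipartite graph $\ini_{\lex}(G)$, I would invoke Theorem~\ref{Wood}, reducing the task to verifying two claims: (a) $\ini_{\lex}(G)$ is weakly chordal, and (b) $\indmatch(\ini_{\lex}(G)) \leq \ell$.

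For (a), the idea is to exploit the interval-facet description of closed graphs from \cite[Theorem 2.2]{EHH}. Any induced cycle in the bipartite graph $\ini_{\lex}(G)$ is even, and a hypothetical induced cycle of length $2t \geq 6$ written as $x_{i_1} - y_{j_1} - x_{i_2} - y_{j_2} - \cdots - x_{i_t} - y_{j_t} - x_{i_1}$ would force $i_k, i_{k+1} < j_k$ with $\{i_k, j_k\}, \{i_{k+1}, j_k\} \in E(G)$; closedness then gives $\{i_k, i_{k+1}\} \in E(G)$ for every $k$, and a careful use of the interval structure places enough of the $i_k$'s and $j_\ell$'s in a common facet to produce a forbidden chord of the cycle. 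A parallel case analysis rules out induced cycles of length $\geq 5$ in the complement of $\ini_{\lex}(G)$.

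For (b), given an induced matching $\{x_{a_1}, y_{b_1}\}, \ldots, \{x_{a_s}, y_{b_s}\}$ in $\ini_{\lex}(G)$, ordered so that $b_1 < \cdots < b_s$, the induced-matching condition (together with $a_i < b_i$) forces $\{a_i, b_j\} \notin E(G)$ whenever $i < j$; combined with the interval description of the clique complex, this should allow one to assemble a sequence of vertices forming an induced path of length $\geq s$ in $G$, whence $s \leq \ell$. The main obstacle is the case analysis for weak chordality in (a), particularly for induced cycles in the complement of $\ini_{\lex}(G)$, where one must carefully extract chords from the interval-facet property; once weak chordality is in hand, the bound in (b) should follow cleanly from the interval structure, and the matching bound $\indmatch(\ini_{\lex}(G)) \geq \ell$ then comes for free from the sandwich.
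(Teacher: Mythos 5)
Your overall architecture is exactly the paper's: reduce to the connected case via the tensor-product decomposition, get $\reg(S/J_G)\geq \ell$ from \cite[Theorem 1.1]{MM}, bound $\reg(S/J_G)\leq\reg(S/\ini_{\lex}(J_G))$, and then compute $\reg(S/\ini_{\lex}(J_G))$ by Theorem~\ref{Wood} after establishing (a) weak chordality of $H=\ini_{\lex}(G)$ and (b) $\indmatch(H)\leq\ell$. These are precisely Lemma~\ref{weakly}, Corollary~\ref{regin} and Proposition~\ref{mainprop} in the paper, so there is no divergence in strategy.

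However, you have deferred essentially all of the technical content, and your sketches of (a) and (b) do not yet contain the ideas that make those steps work. For (a), the complement is actually the easy half (it contains the two cliques on $\{x_1,\dots,x_n\}$ and $\{y_1,\dots,y_n\}$, so any cycle of length $\geq 5$ has three vertices in one of them and hence a chord --- no closedness needed); the real case analysis is for cycles in $H$ itself, where deducing $\{i_k,i_{k+1}\}\in E(G)$ is not enough because a chord of the bipartite cycle must join an $x$-vertex to a $y$-vertex. One has to compare the interleaving of the $i$'s and $j$'s (the paper picks the largest $t$ with $i_t<i_{t+1}$ and derives $i_{t+2}<j_{t+2}\leq i_{t+1}<j_t<j_{t+1}$, forcing the chord $\{x_{i_{t+2}},y_{j_t}\}$). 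For (b), the step ``assemble a sequence of vertices forming an induced path'' is the heart of the whole proof and does not follow ``cleanly'' from the interval structure for an arbitrary induced matching: consecutive $x$-indices of the matching need not be adjacent in $G$. The paper needs two ingredients you have not identified: first, a normalization of the matching, replacing each $x_{i_s}$ by the \emph{smallest} index $t$ with $\{x_t,y_{j_s}\}\in E(H)$ keeping an induced matching (condition $(\ast)$); and second, the \emph{maximality} of the matching, used to derive a contradiction by exhibiting an induced matching of size $m+1$ whenever $i_s$ and $i_{s+1}$ fail to be adjacent. Without these two ideas the construction of the induced path $i_1,\dots,i_m,j_m$ breaks down, so as written the proposal has a genuine gap at its central step.
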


For the proof of this theorem we first need the following results.

\begin{Lemma}\label{weakly}
Let $G$ be a connected closed graph on $[n]$. Then the bipartite graph $H=\ini_{\lex}(G)$ on the vertex set $\{x_1,\ldots,x_n\}\cup\{y_1,\ldots,y_n\}$ is weakly 
chordal.
\end{Lemma}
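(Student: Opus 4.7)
The plan is to exploit the interval-of-facets structure of the closed graph $G$ to describe the edges of $H=\ini_{\lex}(G)$, and then to rule out long induced cycles in $H$ and in $\overline{H}$ by separate arguments.

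First, I would record the following structural fact. Since $G$ is closed, by \cite[Theorem 2.2]{EHH} the facets of $\Delta(G)$ are intervals $F_k=[a_k,b_k]\subseteq [n]$, so for $i<j$ the pair $\{i,j\}$ is an edge of $G$ if and only if some facet contains $[i,j]$. Consequently, the neighborhood of each vertex of $H$ is an ``interval'': in terms of indices, $N_H(x_i)=\{j:i+1\le j\le R_i\}$ with $R_i=\max\{b_k:a_k\le i\le b_k\}$, and symmetrically $N_H(y_j)=\{i:L_j\le i\le j-1\}$. This is the key structural property driving everything else.

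Next, I would rule out induced cycles of length $\ge 6$ in $H$. Since $H$ is bipartite, such a cycle has the form $x_{p_1},y_{q_1},x_{p_2},y_{q_2},\dots,x_{p_k},y_{q_k}$ with $k\ge 3$ and pairwise distinct $p_i$. For each $i$ mod $k$, both $p_i$ and $p_{i+1}$ lie in $N_H(y_{q_i})$, which is an interval, hence $[\min(p_i,p_{i+1}),\max(p_i,p_{i+1})]\subseteq N_H(y_{q_i})$. Chord-freeness of the cycle forces every $p_j$ with $j\neq i,i+1$ to lie strictly outside this interval. Applying this condition with $j=i-1$ and $j=i+2$ forces each consecutive triple $(p_{i-1},p_i,p_{i+1})$ to be strictly monotone. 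Propagating from, say, $p_1<p_2$ gives $p_1<p_2<\cdots<p_k$, and then the wrap-around triple $(p_{k-1},p_k,p_1)$ would require $p_k<p_1$, a contradiction.

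For $\overline{H}$, I would use that both $\{x_1,\dots,x_n\}$ and $\{y_1,\dots,y_n\}$ induce complete subgraphs of $\overline{H}$, since $H$ is bipartite. In any induced cycle of $\overline{H}$, the $x$-vertices on the cycle must therefore be pairwise adjacent in the cycle, as otherwise an $x$--$x$ edge of $\overline{H}$ provides a chord; likewise for the $y$-vertices. Since a clique in $C_m$ with $m\ge 4$ has size at most $2$, each side contributes at most two vertices, so the cycle length is at most $4$. In particular no induced $C_m$ with $m\ge 5$ lives in $\overline{H}$.

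Combining the last two steps shows that $H$ has no induced cycle of length $\ge 5$ in either $H$ or $\overline{H}$, i.e.\ $H$ is weakly chordal. I expect the main obstacle to be the bookkeeping in the second step: correctly translating the chord-freeness in $H$ into strict local monotonicity of the sequence $p_1,\dots,p_k$ and carrying this around the cycle. The first and third steps are then essentially forced by the interval description of the neighborhoods of $H$ and by the clique structure of the two color classes in $\overline{H}$.
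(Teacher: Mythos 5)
Your proof is correct and follows essentially the same route as the paper: the complement is handled identically (the two cliques on $\{x_1,\dots,x_n\}$ and $\{y_1,\dots,y_n\}$ force any induced cycle of $\overline{H}$ to have at most two vertices per side), and for $H$ itself both arguments use the interval structure of a closed labeling to force a monotonicity of the $x$-indices around a hypothetical long induced cycle which then cannot close up. Your packaging via the interval-neighborhood property $N_H(y_j)=\{L_j,\dots,j-1\}$ and the ``no middle element'' condition on consecutive triples is a slightly cleaner organization of the same case analysis that the paper carries out directly on the orderings of $i_\ell,i_{\ell+1},j_\ell,j_{\ell+1}$.
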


\begin{proof}
It is almost obvious that $\overline{H}$ has no induced cycle of length $\geq 5.$ Indeed, this is due to the fact that $\overline{H}$ consists of two complete 
graphs, say $K_n^x$ on the vertex set $\{x_1,\ldots,x_n\}$ and $K_n^y$ on the vertex set $\{y_1,\ldots,y_n\}$, together with the edges $\{x_iy_j: i\geq j\}
\cup\{x_iy_j: i<j,\ \{i,j\}\notin E(G)\}$. Hence, if $C$ is an induced cycle of $\overline{H}$ of length $\geq 5,$ then $C$ contains at least three vertices either from 
$K_n^x$, or from $K_n^y$, thus it cannot be an induced cycle of $\overline{H}.$

Let us now prove that $H$ has no induced cycle of length $\geq 5.$ Assume that this is not the case, and choose $C$ with vertices $x_{i_1},y_{j_1},\ldots,
x_{i_k},y_{j_k}$, $k\geq 3,$ an induced cycle of $H.$ This means that $\{i_\ell, j_\ell\}$ and $\{i_{\ell+1}, j_\ell\}$ are edges of $G$ for $1\leq \ell\leq k,$ 
where we made the convention that $i_{k+1}=i_1.$ We may assume that $i_1<i_2.$ If there exists $\ell$ such that $i_\ell< j_{\ell+1}<j_\ell,$ as $G$ is closed, it 
follows that $\{i_\ell,j_{\ell+1}\}\in E(G)$ which implies that $\{x_{i_\ell},y_{j_{\ell+1}}\}\in E(H)$, a contradiction, since $C$ is an induced subgraph of $H.$ 
Therefore, for all $\ell,$ we must have either $i_\ell <i_{\ell+1}<j_\ell <j_{\ell+1}$ or $i_{\ell+1}< j_{\ell+1}\leq i_\ell <j_\ell.$ As $i_1< i_2,$ we may 
choose $t$ to be the largest index such that $i_t<i_{t+1}.$ Thus, we get $i_t< i_{t+1}<j_t<j_{t+1}$ and $i_{t+2}<j_{t+2}\leq i_{t+1}<j_{t+1},$ which implies that 
$i_{t+2}<j_{t+2}\leq i_{t+1}<j_t<j_{t+1}.$ Since $\{i_{t+2},j_{t+1}\}\in E(G)$ and $G$ is closed, we obtain $\{i_{t+2},j_t\}\in E(G)$ which leads to 
$\{x_{i_{t+2}},y_{j_t}\}\in E(H)$, again a contradiction to the choice of $C$.
\end{proof}

\begin{Corollary}\label{regin}
Let $G$ be a  closed graph on $[n]$ and $H=\ini_{\lex}(G)$. Then $$\reg(S/I(H))=\indmatch(H).$$
\end{Corollary}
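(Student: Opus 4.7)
The plan is to deduce the corollary as an almost immediate consequence of Theorem~\ref{Wood} applied to $H=\ini_{\lex}(G)$, once we check that $H$ is weakly chordal.

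First, I would handle the connected case. If $G$ is a connected closed graph, then Lemma~\ref{weakly} tells us that $H=\ini_{\lex}(G)$ is weakly chordal. The edge ideal $I(H)$ lives in the polynomial ring $K[x_1,\ldots,x_n,y_1,\ldots,y_n]=S$ (after removing isolated vertices of $G$, which we are free to do, every $x_i$ and $y_i$ appears in some edge of $H$). Theorem~\ref{Wood} then yields at once
\[
\reg(S/I(H))=\indmatch(H).
\]

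For the general case, write $G=G_1\sqcup\cdots\sqcup G_r$ as its decomposition into connected components. Since no edge of $G$ can join vertices in different components, neither can any edge of $H$: an edge $\{x_i,y_j\}$ of $H$ records an edge $\{i,j\}\in E(G)$, and $i,j$ then lie in the same $G_k$. Therefore, as a graph, $H$ is the disjoint union of the bipartite graphs $\ini_{\lex}(G_k)$ on the vertex sets $\{x_i:i\in V(G_k)\}\cup\{y_i:i\in V(G_k)\}$. Each of these is weakly chordal by Lemma~\ref{weakly}, and since disjoint unions introduce no new induced cycles (in $H$ or in $\overline{H}$ with respect to each connected piece of $H$), the disjoint union $H$ is again weakly chordal. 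A direct application of Theorem~\ref{Wood} then concludes the proof.

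I do not anticipate a genuine obstacle here: Lemma~\ref{weakly} has already done the serious combinatorial work, and the passage from the connected to the disconnected case is a routine observation about disjoint unions. The only small point worth being careful about is the ambient polynomial ring: because regularity is unchanged by adjoining free variables that do not appear in the generators, we may safely compute $\reg(S/I(H))$ in $S$ even if some variables happened not to occur, so no normalization step is needed beyond the standing assumption that $G$ has no isolated vertex.
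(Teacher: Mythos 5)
Your proposal is correct and follows exactly the route the paper intends: Corollary~\ref{regin} is stated immediately after Lemma~\ref{weakly} with no separate proof, precisely because it is meant to be read as Theorem~\ref{Wood} applied to the weakly chordal graph $H=\ini_{\lex}(G)$. Your extra care about the ambient ring and isolated vertices is fine.

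One small point in your reduction to the connected case deserves sharpening. You justify the weak chordality of a disjoint union by saying that ``disjoint unions introduce no new induced cycles \dots in $\overline{H}$,'' but the complement of a disjoint union is not a disjoint union: $\overline{H_1\sqcup H_2}$ is the \emph{join} of $\overline{H_1}$ and $\overline{H_2}$, so it has many new edges and, a priori, new induced cycles. The claim is still true, but for a different reason: in an induced cycle of a join, every vertex of one side is adjacent to every cycle vertex of the other side, so each side can contribute at most two vertices to an induced cycle that meets both sides; hence any induced cycle of length at least $5$ lies entirely in one $\overline{H_k}$, which has none. Alternatively, you can avoid the issue altogether by observing that the proof of Lemma~\ref{weakly} never uses connectedness of $G$ (the complement argument only uses that $\overline{H}$ contains the two cliques $K_n^x$ and $K_n^y$, and the cycle argument only uses closedness), so the lemma holds verbatim for an arbitrary closed graph. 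Either repair is one line; with it, your proof is complete.
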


\begin{Proposition}\label{mainprop}
Let $G$ be a connected closed graph on $[n]$ and $H=\ini_{\lex}(G)$. Then $\indmatch(H)=\ell,$ where $\ell$ is the length of the longest induced path of $G.$
\end{Proposition}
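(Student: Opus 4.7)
My plan rests on a structural observation that drives both halves: since $G$ is closed, every induced path in $G$ has strictly monotonic vertex labels. Indeed, a ``peak'' $v_{s-1}<v_s>v_{s+1}$ or a ``valley'' $v_{s-1}>v_s<v_{s+1}$ would place both neighbors of $v_s$ on the same side of it in the order, and closedness applied to the two edges at $v_s$ would then force $\{v_{s-1},v_{s+1}\}\in E(G)$, contradicting the induced property.

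For the inequality $\indmatch(H)\geq\ell$, I would take a monotonic induced path $v_0<v_1<\cdots<v_\ell$ and verify that $M=\{\{x_{v_{s-1}},y_{v_s}\}:1\leq s\leq\ell\}$ is an induced matching of $H$. These are edges of $H$ because $v_{s-1}<v_s$ and $\{v_{s-1},v_s\}\in E(G)$, and the endpoints are distinct. A cross pair $\{x_{v_{s-1}},y_{v_t}\}$ with $s\neq t$ is not an edge of $H$: when $t\leq s-1$ the index condition $v_{s-1}\geq v_t$ excludes it from the minimal generators of $\ini_{\lex}(J_G)$, and when $t\geq s+1$ the corresponding pair $\{v_{s-1},v_t\}$ is a non-edge of $G$ between vertices at path-distance $\geq 2$.

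For the reverse inequality $\indmatch(H)\leq\ell$, let $M=\{\{x_{i_s},y_{j_s}\}:1\leq s\leq k\}$ be an induced matching sorted so that $i_1<\cdots<i_k$ and each $i_s<j_s$. Two preliminary consequences of closedness combined with the induced-matching condition are $j_1<\cdots<j_k$ and $j_s\leq i_{s+1}$ for every $s$, both proved by the same template: were either to fail, the interval-facet property of $\Delta(G)$ would produce a new edge $\{i_s,j_t\}$ or $\{i_{s+1},j_s\}$ of $G$ that becomes a cross edge of $H$, contradicting the induced condition. Combined with $\{i_s,j_t\}\notin E(G)$ for $s<t$, this forces the $k$ matching edges to lie in $k$ distinct facets $F_{\pi(1)}<\cdots<F_{\pi(k)}$ of $\Delta(G)$, with the strict separations $i_s<a_{\pi(s+1)}$ and $b_{\pi(s)}<j_{s+1}$.

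The main obstacle is turning this data into an induced path of length $\geq k$. My plan is to build a monotonic sequence $v_0<v_1<\cdots<v_m$ with $m\geq k$, starting at $v_0=i_1$ and reaching $j_k$: when $j_s\geq a_{\pi(s+1)}$ the choice $v_s=j_s$ places $\{j_s,j_{s+1}\}$ in $F_{\pi(s+1)}\subseteq E(G)$, and when $j_s<a_{\pi(s+1)}$ one inserts bridge vertices through the intermediate facets of $\Delta(G)$ linking $F_{\pi(s)}$ to $F_{\pi(s+1)}$---these exist by connectedness of $G$ and can only lengthen the path. Verifying that the resulting sequence is actually induced, not merely a walk, is the technical heart of the argument: the strict separations above, together with the fact that no two matching edges share a facet, must be carefully leveraged to rule out any would-be shortcut edge between non-adjacent constructed vertices.
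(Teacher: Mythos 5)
Your first half is fine and matches the paper's: the monotonicity of induced paths in a closed graph is exactly the (implicit) point needed to see that $\{x_{v_0},y_{v_1}\},\dots,\{x_{v_{\ell-1}},y_{v_\ell}\}$ is an induced matching, and your verification of the cross pairs is correct. Your preliminary reductions for the reverse inequality are also correct: $j_s\leq i_{s+1}$, $j_1<\cdots<j_k$, and the fact that the $k$ matching edges lie in $k$ distinct facets with the stated separations all follow from the interval-facet property exactly as you say.

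The problem is that the reverse inequality is not actually proved. You reduce everything to constructing an induced path of length $\geq k$ by walking through the facets $F_{\pi(1)},\dots,F_{\pi(k)}$ and inserting ``bridge vertices'' whenever $j_s<a_{\pi(s+1)}$, and you then state that verifying inducedness of the bridged sequence ``is the technical heart of the argument'' --- but that verification is precisely the content of the inequality, and it is omitted. Nothing in your setup controls how a bridge vertex $w$ with $j_s<w<j_{s+1}$ interacts with the vertices $j_1,\dots,j_{s-1}$ or $j_{s+2},\dots,j_k$, nor whether several bridge vertices between consecutive facets can be chosen pairwise non-adjacent; choosing them greedily inside the intermediate intervals can easily create chords, and avoiding this requires a genuine argument that is not sketched. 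The paper sidesteps bridging entirely by a different normalization: replace each $x_{i_s}$ by the \emph{smallest} index $t$ such that $\{x_t,y_{j_s}\}\in E(H)$ and the matching stays induced. Under this minimality condition, the \emph{maximality} of $m=\indmatch(H)$ forces $\{i_s,i_{s+1}\}\in E(G)$ for every $s$ (otherwise one explicitly exhibits an induced matching of size $m+1$, treating separately the cases $\{j_s,i_{s+1}\}\in E(G)$ and $\{j_s,i_{s+1}\}\notin E(G)$), while closedness rules out three consecutive $i$'s in one clique; hence $i_1,\dots,i_m,j_m$ is itself an induced path of length $m$, with no auxiliary vertices needed. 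Note that your outline never invokes maximality of the matching in the second half, which is a sign something essential is missing: the inequality $\indmatch(H)\leq\ell$ as you argue it would have to hold for an arbitrary induced matching, and the paper's proof shows the useful structure (consecutive $i$'s adjacent) really does come from maximality plus the minimal-choice normalization. To repair your argument, either adopt that normalization or supply a complete, case-checked proof that your bridged sequence is induced.
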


\begin{proof}
First we show that $\indmatch(H)\geq \ell.$ This follows easily since it is obvious that if $i_0,\ldots,i_{\ell}$ is an induced path in $G$ of length $\ell,$
then the edges $\{x_{i_0},y_{i_1}\}, \{x_{i_1},y_{i_2}\},\ldots,\{x_{i_{\ell-1}},y_{i_\ell}\}$ form an induced subgraph of $H.$

We show now that $\indmatch(H)\leq \ell.$ Let $\indmatch(H)=m.$ Then $H$ has $m$ pairwise disjoint edges $\{x_{i_1},y_{j_1}\},\ldots, \{x_{i_m},y_{j_m}\}$ that 
form an induced subgraph of $H.$ We may assume that $i_1<\cdots < i_m.$ To show the desired inequality we  construct a path of length $m$ in $G$.

As $G$ is closed, we may assume, as we have seen in Preliminaries, that all the facets of the clique complex of $G$ are intervals. This implies that, for 
$s=1,\ldots, m,$ we may choose the smallest index $i_s^\prime$ among the indices $t$ such that $\{x_t,y_{j_s}\}\in E(H)$ and \[\{x_{i_1^\prime},y_{j_1}\},\ldots, 
\{x_{t},y_{j_s}\},\{x_{i_{s+1}},y_{j_{s+1}}\},\ldots,\{x_{i_m},y_{j_m}\}\] is an induced matching of $H.$  

By these arguments, we may assume already from the beginning that the induced subgraph of $H$ with $m$ pairwise distinct edges, $\{x_{i_1},y_{j_1}\},\ldots, 
\{x_{i_m},y_{j_m}\}$,  satisfies the following condition: 
\[(\ast)\ \ \ \ \text{ for all } 1\leq s\leq m, \text{ if } t<s \text{ and }  \{t,j_s\}\in E(G), 
\]
\[
\text{ then } 
\{x_{i_1},y_{j_1}\},\ldots,\{ x_t,y_{j_s}\},\{x_{i_{s+1}},y_{j_{s+1}}\},\ldots,\{x_{i_m},y_{j_m}\} \text{ is not an induced matching of } H.
\]
Note that we also have $j_t\leq i_{t+1}$ for all $1\leq t\leq m-1.$ Indeed, if there exists $t$ such that $j_t>i_{t+1},$ then it follows that $i_t<i_{t+1}<j_t$. 
We obtain  $\{i_{t+1},j_t\}\in E(G)$ and $\{x_{i_{t+1}},y_{j_t}\}\in E(H)$, a contradiction to our hypothesis. 

In the second part of the proof we show that, under  condition ($\ast$) for the induced subgraph $\{x_{i_1},y_{j_1}\},$ $\ldots, 
\{x_{i_m},y_{j_m}\}$ of $H$, we have:
\begin{enumerate}
	\item [(i)] $i_s$ and $i_{s+1}$ belong to the same clique of $G$ for all $1\leq s\leq m-1,$  
	\item [(ii)] $i_s,i_{s+1},i_{s+2}$ do not belong to the same clique for any $1\leq s\leq m-2.$
\end{enumerate}

Let us assume that we have already shown (i) and (ii). Then $L: i_1,i_2,\ldots,i_m,j_m$ is an induced path of $G.$ Indeed, by (i), $L$ is a path in $G$. Next, it 
is clear that we cannot have an 
edge $\{i_s,i_q\}\in E(G)$ with $q-s\geq 2$ by (ii). In addition, by ($\ast$), it follows that $\{i_s,j_m\}\notin E(G)$ for any 
$1\leq s\leq m-1.$ Therefore, $L$ is an induced path of $G.$

Let us first  prove (ii). Suppose that there are three consecutive vertices $i_s,i_{s+1}, i_{s+2}$ in $L$ which belong to the same clique of $G.$ Hence $\{i_s,i_{s+2}\}\in E(G).$ As
$i_s< j_s\leq i_{s+1}< j_{s+1}\leq i_{s+2}<j_{s+2}$, we also have $\{i_s,j_{s+1}\}\in E(G),$ which is impossible. 

Finally, we show (i). Let us assume that there exists $s$ such that $i_s$ and $i_{s+1}$ do not belong to the same clique of $G$, in other words, 
$\{i_s,i_{s+1}\}\notin E(G).$ In particular, we have $j_s <i_{s+1}$, thus $i_s<j_s <i_{s+1}$. We need to consider the following two cases.

{\em Case} (a). $\{j_s,i_{s+1}\}\in E(G).$ We claim that 
\[
\{x_{i_1},y_{j_1}\},\ldots,\{x_{i_s},y_{j_s}\}, \{x_{j_s},y_{i_{s+1}}\}, \{x_{i_{s+1}},y_{j_{s+1}}\}, \ldots, \{x_{i_m},y_{j_m}\}
\]
is an induced subgraph of $H$ with pairwise distinct edges. This will lead to contradiction since $\indmatch(H)=m.$ To prove our claim, we note that 
$\{x_{j_s},y_{j_{s+1}}\}\notin E(H)$ by ($\ast$) and $\{x_{i_s},y_{i_{s+1}}\}\notin E(H)$ since $\{i_s,i_{s+1}\}\notin 
E(G).$ Moreover, if $\{x_{i_q},y_{i_{s+1}}\}\in E(H)$ for some $q<s,$ then, as we have $i_q<i_s<j_s<i_{s+1},$ we get $\{i_q,j_s\}\in E(G)$, thus 
$\{x_{i_q},y_{j_s}\}\in E(H)$, contradiction. Similarly, if $\{x_{j_s},y_{j_q}\}\in E(H)$ for some $q\geq s+2,$ as $j_s< i_{s+1}<i_q<j_q$, we get
$\{i_{s+1},j_q\}\in E(G)$, that is, $\{x_{i_{s+1}},y_{j_q}\}\in E(H)$, contradiction.

{\em Case} (b). $\{j_s,i_{s+1}\}\notin E(G).$ Let then $j=\min\{t: \{t,{i_{s+1}}\}\in E(G)\}$. Since $G$ is closed, we must have $j> j_s> i_s$. Let us consider the following 
disjoint edges of $H:$ 
\[
\{x_{i_1},y_{j_1}\},\ldots,\{x_{i_s},y_{j_s}\}, \{x_{j},y_{i_{s+1}}\}, \{x_{i_{s+1}},y_{j_{s+1}}\}, \ldots, \{x_{i_m},y_{j_m}\}.
\]
These edges determine an induced subgraph of $H,$which leads again to a contradiction to the fact that $\indmatch(H)=m.$ Indeed, since $j< i_{s+1}$, it follows that 
$\{j,j_{s+1}\}\notin E(G)$. As in the previous case, we get $\{x_{i_q},y_{i_{s+1}}\}\notin E(H)$ for $q<s$. Let us assume that $\{x_j,y_{j_q}\}\in E(H)$
for some $q\geq s+2.$ Then $\{j,j_q\}\in E(G)$ and since $j< i_{s+1}<j_{s+1}<j_q,$ we get $\{i_{s+1},j_q\}\in E(G)$ or, equivalently, $\{x_{i_{s+1}},y_{j_q}\}\in 
E(H)$, impossible.
\end{proof}

\begin{proof}[Proof of Theorem \ref{reg}] It is obvious that if $S_i$ is the polynomial ring in the indeterminates indexed by the vertex set of $G_i$ for $1\leq 
i\leq r,$ then $S/J_G\cong \bigotimes_{i=1}^r (S_i/J_{G_i})$. Therefore, $\reg(S/J_G)=\sum_{i=1}^r \reg(S_i/J_{G_i})$. This equality shows that it is enough to prove 
our statement for a connected closed graph $G.$ Therefore, for the remaining part of the proof we take $G$ to be connected and show that $\reg(S/J_G)=\ell,$ 
where 
$\ell$ is the length of the longest induced path of $G.$ 

The inequality $\reg(S/J_G)\geq \ell$ is known by \cite[Theorem 1.1]{MM}. On the other hand, by using \cite[Theorem 3.3.4]{HH10} and Corollary~\ref{regin}, we get 
\[
\reg(S/J_G)\leq \reg(S/\ini_{\lex}(J_G))=\reg(S/I(H))= \indmatch(H)
\]
where $H=\ini_{lex}(G).$ But, by Proposition~\ref{mainprop}, $\indmatch(H)=\ell$, and this ends our proof.
\end{proof}

\begin{Corollary}\label{char}
Let $G$ be a closed graph. Then the regularity of $J_G$ and of $\ini_{\lex}(J_G)$ do not depend on the characteristic of the base field.  
\end{Corollary}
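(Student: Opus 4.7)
The plan is to observe that this corollary is an immediate consequence of Theorem~\ref{reg}. That theorem asserts the chain of equalities
\[
\reg(S/J_G) \;=\; \reg(S/\ini_{\lex}(J_G)) \;=\; \ell_1 + \cdots + \ell_r,
\]
where the right-hand side is the sum of the lengths of the longest induced paths in the connected components of $G$. The crucial point is that this quantity is purely combinatorial: it is defined solely from the graph $G$, with no reference to the base field or its characteristic.

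Accordingly, I would simply invoke Theorem~\ref{reg} for the polynomial rings $S_{K}=K[x_1,\ldots,x_n,y_1,\ldots,y_n]$ over two fields $K$ and $K'$ of possibly different characteristics. In both cases the theorem yields the same integer $\ell_1+\cdots+\ell_r$, so
\[
\reg(S_{K}/J_G)=\reg(S_{K'}/J_G) \quad\text{and}\quad \reg(S_{K}/\ini_{\lex}(J_G))=\reg(S_{K'}/\ini_{\lex}(J_G)).
\]
Since regularity of an ideal and of $S$ modulo that ideal differ only by $1$, the same independence holds for $\reg(J_G)$ and $\reg(\ini_{\lex}(J_G))$ themselves.

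There is really no obstacle: the entire content of the corollary is already packaged inside Theorem~\ref{reg}, whose proof relied on \cite[Theorem~1.1]{MM} (the combinatorial lower bound $\reg(S/J_G)\geq \ell$, valid over any field), on the comparison $\reg(S/J_G)\leq \reg(S/\ini_{\lex}(J_G))$ (valid over any field by \cite[Theorem~3.3.4]{HH10}), and on Theorem~\ref{Wood} applied to the weakly chordal bipartite graph $\ini_{\lex}(G)$ (whose conclusion, $\reg=\indmatch$, is again characteristic-free). Thus the only step is to record that all ingredients are characteristic-independent and cite Theorem~\ref{reg}.
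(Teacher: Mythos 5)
Your proposal is correct and matches the paper's (implicit) argument exactly: the corollary is stated without proof precisely because Theorem~\ref{reg} identifies $\reg(S/J_G)=\reg(S/\ini_{\lex}(J_G))$ with the purely combinatorial quantity $\ell_1+\cdots+\ell_r$, which is independent of the base field. Your additional remark that all ingredients of the proof of Theorem~\ref{reg} are themselves characteristic-free is accurate but not needed once the equality with a combinatorial invariant is in hand.
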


In \cite{MM}, Matsuda and Murai conjectured that if $G$ is a connected graph on $n$ vertices, then $\reg(S/J_G)=n-1$ if and only if $G$ is a line graph.  Theorem~\ref{reg}
gives a positive answer to this conjecture for closed graphs.

\begin{Corollary}
Let $G$ be a connected closed graph on $[n]$. Then $\reg(S/J_G)=n-1$ if and only if $G$ is a line graph.
\end{Corollary}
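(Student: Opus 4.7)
The plan is to read this off directly from Theorem~\ref{reg}, since essentially all the work has been done. By Theorem~\ref{reg} applied to the connected graph $G$, we have $\reg(S/J_G) = \ell$, where $\ell$ is the length of the longest induced path of $G$. Thus the statement $\reg(S/J_G) = n-1$ translates into the combinatorial assertion that $G$ has an induced path of length $n-1$, and the whole corollary reduces to showing that a connected graph on $[n]$ has an induced path of length $n-1$ if and only if $G$ itself is a path (which is the meaning of ``line graph'' in the Matsuda--Murai conjecture).

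For the forward direction, suppose $\ell = n-1$, and let $v_0, v_1, \ldots, v_{n-1}$ be an induced path of that length in $G$. Since this path already uses all $n$ vertices of $G$, the fact that it is induced forces $\{v_i, v_j\} \in E(G)$ exactly when $|i-j|=1$; in other words, $G$ coincides with this path. Conversely, if $G$ is a path on $n$ vertices, then trivially $\ell = n-1$ (take $G$ itself as the induced path), and applying Theorem~\ref{reg} yields $\reg(S/J_G) = n-1$.

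There is no real obstacle here: the only subtlety is checking that ``induced path of length $n-1$ on $n$ vertices'' collapses to ``$G$ is itself a path,'' which is immediate from the definition of an induced subgraph. A half-sentence remark may also be warranted to confirm that a path is indeed closed (with the natural labeling), so that Theorem~\ref{reg} applies in the converse direction.
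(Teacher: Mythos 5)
Your proposal is correct and matches the paper's (implicit) argument: the corollary is stated as an immediate consequence of Theorem~\ref{reg}, reducing to the observation that a connected graph on $n$ vertices has an induced path of length $n-1$ exactly when it is itself a path, i.e.\ a line graph in the sense of Matsuda--Murai. Your remark that a path is closed under its natural labeling correctly covers the converse direction.
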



In \cite{Sara2}, the following conjecture is proposed.

\begin{Conjecture}\label{Saraconj}
Let $G$ be a graph. Then $\reg(S/J_G)\leq r,$ where $r$ is the number of maximal cliques of $G.$
\end{Conjecture}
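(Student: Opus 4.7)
The plan is induction, peeling off a simplicial vertex from a leaf clique of the clique tree. The base case $G = K_n$ (a single maximal clique) is handled by noting that $J_G = I_2(X)$ has a $2$-linear Eagon--Northcott resolution, so $\reg(S/J_G)=1$, which matches the bound.

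For the inductive step I would restrict to the chordal case. Pick a simplicial vertex $v$ lying in a clique $F$ that is a leaf of the clique tree, and list the neighbors $F\setminus\{v\}=\{u_1,\dots,u_{k-1}\}$. Build $G$ up from $H_0 = G\setminus v$ by adding the edges $\{v,u_i\}$ one at a time, producing $H_0\subset H_1 \subset\dots\subset H_{k-1}=G$. For each step the short exact sequence
\[
0 \to (S/(J_{H_{i-1}} : f_{u_i v}))[-2] \xrightarrow{\,\cdot f_{u_i v}\,} S/J_{H_{i-1}} \to S/J_{H_i} \to 0
\]
yields
\[
\reg(S/J_{H_i}) \leq \max\bigl\{\reg(S/J_{H_{i-1}}),\ \reg(S/(J_{H_{i-1}}:f_{u_i v}))+1\bigr\}.
\]
Since $H_0 = G\setminus v$ has at most $r-1$ maximal cliques (one fewer when $v$ is a leaf of $G$; a slightly more delicate analysis is required when $|F|\geq 3$), induction gives $\reg(S/J_{H_0})\leq r-1$.

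The main obstacle is controlling each colon ideal $J_{H_{i-1}}:f_{u_i v}$. Expanding a general $g \in J_{H_{i-1}}:f_{u_i v}$ as a polynomial in $x_v, y_v$ over the other variables and matching coefficients of $g f_{u_i v}$ against $J_{H_{i-1}}$ forces, for almost all coefficients, relations of the shape $g_{pq}x_{u_i},\, g_{pq}y_{u_i} \in J_{H_{i-1}}$; so the colon is closely tied to $J_{H_{i-1}} : (x_{u_i}, y_{u_i})$. In favorable cases this ideal identifies with the binomial edge ideal of an auxiliary graph obtained by a local modification at $u_i$, whose maximal-clique count is bounded by $r-1$, and the induction closes to $\reg(S/J_G)\leq r$. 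Verifying that the auxiliary graph stays inside the inductive class is a delicate combinatorial check, and is precisely what would restrict the argument to a subfamily of chordal graphs rather than handling all of them at once; for arbitrary graphs, where simplicial vertices need not exist and the colon may fail to be a binomial edge ideal, a genuinely new idea seems necessary --- perhaps an extension of Theorem~\ref{Wood} past weakly chordal graphs, or the use of a non-lexicographic initial ideal combined with a sharp bound on induced matchings.
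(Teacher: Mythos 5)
First, be aware that the statement you were asked to prove is stated in the paper as Conjecture~\ref{Saraconj} and is \emph{not} proved there in general: the paper only establishes it (Theorem~\ref{regchordal}) for chordal graphs in which any two distinct maximal cliques meet in at most one vertex. Your proposal, by your own admission, also falls short of the general statement, so the real question is whether your sketch would at least recover a chordal special case, and there it has concrete gaps.

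There are two. First, the claim that $H_0=G\setminus v$ has at most $r-1$ maximal cliques fails whenever the leaf clique $F$ has $|F|\geq 3$ and $F\setminus\{v\}$ is not absorbed into another clique: for $G$ with maximal cliques $\{1,2,3,4\}$ and $\{4,5\}$, deleting the simplicial vertex $1$ leaves the two maximal cliques $\{2,3,4\}$ and $\{4,5\}$, so the clique count does not drop, induction only gives $\reg(S/J_{H_0})\leq r$, and your recursion then bounds $\reg(S/J_G)$ by $r+1$ rather than $r$. Second, and more seriously, the colon ideal $J_{H_{i-1}}:f_{u_iv}$ is the crux of the argument and is left essentially uncomputed; such colon ideals of binomial edge ideals are in general \emph{not} binomial edge ideals (they acquire extra monomial and path-type generators beyond those of any auxiliary graph), so the step ``identifies with the binomial edge ideal of an auxiliary graph with at most $r-1$ cliques'' is precisely the missing proof, not a favorable case one may assume. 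The paper sidesteps both problems with a different decomposition: it splits $J_G=Q_1\sect Q_2$ according to whether the cut vertex $i$ (where the leaf clique meets the rest of $G$) lies in the set $\Sc$ indexing a minimal prime $P_{\Sc}(G)$, identifies $Q_1$, $Q_2$ and $Q_1+Q_2$ with binomial edge ideals of smaller graphs \emph{in the same restricted class} (possibly plus the variables $x_i,y_i$), and applies the regularity estimate for the exact sequence $0\to S/J_G\to S/Q_1\oplus S/Q_2\to S/(Q_1+Q_2)\to 0$, inducting on the number $r$ of maximal cliques. Keeping every object inside the inductive class is exactly what that decomposition buys and what your colon-ideal route does not guarantee.
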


Note that, for chordal graphs, this conjecture  implies Matsuda and Murai's conjecture, once we show that the latter is true for trees. Indeed, if $G$ is a connected chordal graph with 
$\reg(S/J_G)=n-1,$ then, if Conjecture~\ref{Saraconj} is true,  it follows that $n-1\leq r,$ thus $G$ must be a tree.  

In the sequel, 
 we first prove Conjecture~\ref{Saraconj} for the binomial edge ideals associated with a special class of chordal graphs  
introduced in \cite[Section 1]{EHH}, and, afterwards, we show that  Matsuda and Murai's 
conjecture  is true for this special class of graphs which includes trees. 

\begin{Theorem}\label{regchordal} 
Let $G$ be a  chordal graph on $[n]$ with the property that any two distinct maximal cliques intersect in  at most one  vertex. Then 
$\reg(S/J_G)\leq r$ where $r$ is the number of maximal cliques of $G.$
\end{Theorem}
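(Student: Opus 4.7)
The plan is to argue by induction on $r$, the number of maximal cliques. The base case $r=1$ means $G=K_n$ and $J_G=I_2(X)$ is the ideal of $2$-minors of the generic $2\times n$ matrix, which has a $2$-linear Eagon--Northcott resolution, so $\reg(S/J_G)=1$. For the inductive step, the hypothesis forces $G$ to be a block graph, so its block-cut tree has leaves. If $G$ is disconnected, reduce to connected components (regularity is additive). Otherwise, pick a leaf maximal clique $F$ meeting the rest of $G$ in exactly one cut vertex $v$, and set $G'=G\setminus(F\setminus\{v\})$; then $G'$ satisfies the hypothesis with $r-1$ maximal cliques, so by induction $\reg(S/J_{G'})\le r-1$.

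The crucial step is the inequality $\reg(S/J_G)\le \reg(S/J_{G'})+1$. I would first handle the sub-case where $F=\{v,u\}$ is a pendant edge: writing $S=S'[x_u,y_u]$ with $S'=K[x_i,y_i:i\ne u]$, the element $f_{vu}=x_vy_u-x_uy_v$ is a non-zerodivisor on $(S'/J_{G'})[x_u,y_u]$, because its leading coefficient $x_v$ lies in no minimal prime of $J_{G'}$ (the prime $P_{\emptyset}(G')$ contains no variables at all, so $x_v$ is regular on $S'/J_{G'}$). The short exact sequence
\[
0\to (S/J_{G'}S)(-2)\xrightarrow{\,\cdot f_{vu}\,}S/J_{G'}S\to S/J_G\to 0
\]
then yields $\reg(S/J_G)\le \reg(S'/J_{G'})+1\le r$, as required.

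For $|F|\ge 3$, the plan is to reduce to the pendant-edge case by iteratively peeling off simplicial free vertices of $F\setminus\{v\}$: removing such a $u$ leaves $F\setminus\{u\}$ as a maximal clique of $G\setminus u$ of size $\ge 2$ (by the 1-intersection property), so $G\setminus u$ still has $r$ maximal cliques. Provided one can establish the monotonicity $\reg(S/J_G)\le \reg(S/J_{G\setminus u})$, iterating this eventually reduces $|F|$ to $2$, bringing us to the already-handled pendant-edge case.

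\textbf{Main obstacle.} Establishing the monotonicity $\reg(S/J_G)\le \reg(S/J_{G\setminus u})$ for a simplicial free vertex $u$ in a clique $F$ of size $\ge 3$ is the main obstacle. I would attack it via the Mayer--Vietoris sequence
\[
0\to S/(J_{G\setminus u}\cap J_F)\to (S/J_{G\setminus u})\oplus (S/J_F)\to S/J_G\to 0
\]
coming from $J_G=J_{G\setminus u}+J_F$. Since $\reg(S/J_F)=1$, the bound reduces to controlling $\reg(S/(J_{G\setminus u}\cap J_F))$. One might naively hope $J_{G\setminus u}\cap J_F=J_{F\setminus\{u\}}$ (which would have regularity $1$), but small examples show extra elements arising from Pl\"ucker-type identities such as $f_{13}f_{24}=f_{12}f_{34}+f_{14}f_{23}$. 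The syzygy analysis needed here is aided by the observation that, since $N(u)$ is a clique in $G\setminus u$, one has Koszul-type relations $x_if_{ju}-x_jf_{iu}=-x_u f_{ij}$ and $y_if_{ju}-y_jf_{iu}=-y_u f_{ij}$ with $f_{ij}\in J_{G\setminus u}$ for all $i,j\in N(u)$; exploiting these relations systematically to produce the desired regularity bound on the intersection is the main technical challenge of the proof.
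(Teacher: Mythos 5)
Your proposal diverges from the paper's argument (which decomposes $J_G=Q_1\sect Q_2$ according to whether the cut vertex belongs to the cut set $\Sc$ indexing the minimal primes $P_{\Sc}(G)$, and runs the induction through the sequence $0\to S/J_G\to S/Q_1\oplus S/Q_2\to S/(Q_1+Q_2)\to 0$ using the identifications $Q_1=J_{G'}$, $Q_2=(x_i,y_i)+J_{G''}$, $Q_1+Q_2=(x_i,y_i)+J_{G'}$ from \cite{EHH}), and unfortunately your route has a genuine gap already in the case you consider settled. The claim that $f_{vu}$ is a non-zerodivisor on $(S'/J_{G'})[x_u,y_u]$ is false in general, and the justification offered is a non sequitur: the fact that the single minimal prime $P_{\emptyset}(G')$ contains no variables does not prevent $x_v$ from lying in \emph{other} minimal primes $P_{\Sc}(G')$ with $v\in\Sc$, which happens exactly when $v$ lies in a cut-point set of $G'$. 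Concretely, take $G=K_{1,3}$ with center $v=1$ and leaves $2,3,4$, and peel off the pendant edge $\{1,4\}$, so $G'$ is the star on $\{1,2,3\}$ and $J_{G'}=(f_{12},f_{13})$. Here $\{1\}$ is a cut-point set of $G'$, $P_{\{1\}}(G')=(x_1,y_1)$ is a minimal prime, and indeed $x_1f_{23}=x_2f_{13}-x_3f_{12}$ and $y_1f_{23}=y_2f_{13}-y_3f_{12}$ lie in $J_{G'}$ while $f_{23}$ does not; hence $f_{14}\cdot f_{23}=y_4(x_1f_{23})-x_4(y_1f_{23})\in J_{G'}S$, so $f_{14}$ is a zerodivisor modulo $J_{G'}S$. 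Your short exact sequence is therefore not exact on the left, and the bound $\reg(S/J_G)\leq\reg(S/J_{G'})+1$ would require controlling the colon ideal $(J_{G'}S:f_{vu})$, which is exactly the kind of information you do not have. Since every tree with at least three edges has an internal vertex, this failure is the generic situation, not an edge case.

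The second half of the plan (reducing $|F|\geq 3$ to the pendant-edge case by deleting simplicial vertices) rests on the monotonicity $\reg(S/J_G)\leq\reg(S/J_{G\setminus u})$, which you yourself flag as the main unresolved obstacle; note that the known monotonicity for induced subgraphs (\cite{MM}) goes in the opposite direction, so this step is a substantive open claim in your write-up, not a routine verification. The lesson from the paper's proof is that the right short exact sequence is not the multiplication-by-$f_{vu}$ sequence but the Mayer--Vietoris sequence attached to the splitting of the minimal primes of $J_G$ at the cut vertex $i$; there the three outer terms are again (sums of) binomial edge ideals of graphs in the same class with fewer cliques or fewer vertices, so the induction closes using only \cite[Corollary 18.7]{P}, with no regular-sequence or deletion-monotonicity claims needed.
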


\begin{proof} Obviously, by using the argument given in the beginning of the proof of Theorem~\ref{reg}, we may assume that $G$ is connected. We make induction 
on $r$ and  closely follow some arguments given in the proof of \cite[Theorem 1.1]{EHH}. For $r=1,$ the statement is well known. Let $r>1.$ As in the proof of 
\cite[Theorem 1.1]{EHH}, let us consider $F_1,\ldots, F_r$ a leaf order on the facets of the clique complex $\Delta(G)$ of $G.$ Since $F_r$ is a leaf, there 
exists a unique vertex, say $i\in F_r$, such that $F_r\sect F_j=\{i\}$  for some $j$. Let $F_{t_1},\ldots,F_{t_q}$ be the
facets of $\Delta(G)$ which intersect the leaf $F_r$ in the vertex $i.$  We decompose $J_G$ as $J_G=Q_1\sect Q_2$ where $Q_1$ is the intersection of all minimal 
primes $P_\Sc(G)$ of $J_G$ with $i\not\in \Sc$, and $Q_2$ is the intersection of all minimal primes $P_\Sc(G)$ with $i\in \Sc.$ This decomposition yields the 
following exact sequence of $S$-modules:

\begin{equation}\label{eqchordal}
0 \to S/J_G\to S/Q_1\oplus S/Q_2\to S/(Q_1+Q_2)\to 0.
\end{equation}

In the proof of \cite[Theorem 1.1]{EHH}, it was shown that $Q_1=J_{G^\prime}$ where $G^\prime$ inherits the properties of $G$ and has less than $r$ cliques. 
Hence, by the inductive hypothesis, $\reg(S/J_{G^\prime})<r. $ In the same proof it was shown that $Q_2=(x_i,y_i)+J_{G^{\prime\prime}}$ where $G^{\prime\prime}$ is a chordal 
graph on $n-1$ vertices with $q+1\geq 2$ components satisfying the conditions of the theorem and with at most $r$ cliques. Therefore,  we get $\reg(S/Q_2)\leq r$ and, consequently,  $\reg(S/Q_1\oplus S/Q_2)\leq r.$ Finally, as it was observed in  \cite[Theorem 1.1]{EHH}, $Q_1+Q_2=(x_i,y_i)+J_{G^\prime}$, hence 
$S/Q_1+Q_2\cong S_i/J_H$ for a graph $H$ on the vertex set $[n]\setminus\{i\}$ which has the same properties as $G$ and a number of cliques $<r.$ (Here $S_i$ is the polynomial ring in the variables indexed by $[n]\setminus\{i\}$.) Consequently, by induction, we have $\reg(S/Q_1+Q_2)<r.$ 

By applying \cite[Corollay 18.7]{P} to the sequence~(\ref{eqchordal}), we get $\reg(S/J_G)\leq r.$
\end{proof}

By using the notation and the arguments in the proof of the above theorem we also get the following statement.

\begin{Corollary}\label{corolarMM}
Let $G$ be be a  connected chordal graph on $[n]$ with the property that any two distinct maximal cliques intersect in  at most one  vertex. If $\reg(S/J_G)=n-1$, then $G$ is a line graph.
\end{Corollary}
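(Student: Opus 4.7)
The plan is to first use Theorem~\ref{regchordal} to force $G$ to be a tree, and then to show by induction on $n$ that such a tree with maximal regularity must be a path; the inductive step rests on a refined analysis of the exact sequence~(\ref{eqchordal}).

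For the first reduction, the hypothesis together with Theorem~\ref{regchordal} gives $r \geq n-1$, where $r$ is the number of maximal cliques of $G$. On the other hand, any connected chordal graph whose maximal cliques meet pairwise in at most one vertex is a block graph and satisfies the block-cut tree identity $\sum_{j=1}^r |F_j| = n + r - 1$. Combined with $|F_j|\geq 2$, this forces $r \leq n-1$, with equality precisely when every block is an edge, i.e., when $G$ is a tree. Hence $G$ is a tree with $r = n-1$.

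I would then induct on $n$; the cases $n\leq 3$ are immediate. For $n\geq 4$, following the notation of the proof of Theorem~\ref{regchordal}, fix a leaf edge $F_r=\{v,i\}$ of $G$ and set $d=\deg_G(i)\geq 2$. Tracking the construction in the tree case: $G'$ is obtained by fusing the $d$ edges at $i$ into a single clique $K_{d+1}$ on $i$ together with its $d$ neighbors, and has $n-d$ cliques; $G''$ is the forest $G-i$ with $n-1-d$ cliques; and the graph $H$ defining $S/(Q_1+Q_2)\cong S_i/J_H$ is $G-i$ with a clique $K_d$ on the $d$ neighbors of $i$ added, again with $n-d$ cliques. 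Each of these remains a block graph with fewer than $r$ cliques, so the inductive form of Theorem~\ref{regchordal} yields
\begin{equation*}
\reg(S/Q_1) \leq n-d, \quad \reg(S/Q_2)\leq n-1-d, \quad \reg(S/(Q_1+Q_2)) \leq n-d.
\end{equation*}
Substituting into the bound coming from~(\ref{eqchordal}) gives
\begin{equation*}
n-1 = \reg(S/J_G) \leq \max(n-d,\ n-1-d,\ (n-d)+1) = n-d+1,
\end{equation*}
which forces $d=2$. With $d=2$, only the third term in the maximum can contribute $n-1$, so $\reg(S/J_H)=\reg(S/(Q_1+Q_2))=n-2$.

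When $d=2$, the clique $K_d$ added in forming $H$ is just the edge between the two neighbors of $i$, so $H$ is itself a tree on $n-1$ vertices with $\reg(S/J_H)=n-2$. By the inductive hypothesis, $H$ is a path; since $\deg_H(v)=1$, the leaf $v$ sits at an endpoint, say $H:v-w-h_3-\cdots-h_{n-1}$. Reconstructing $G$ from $H$ amounts to subdividing the edge $\{v,w\}$ by inserting $i$, giving $G:v-i-w-h_3-\cdots-h_{n-1}$, a path on $n$ vertices. The main obstacle I anticipate is the clique-count bookkeeping for $G'$, $G''$, and $H$, together with verifying that each remains a block graph so that the inductive use of Theorem~\ref{regchordal} is legitimate.
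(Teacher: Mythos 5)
Your proof is correct and takes essentially the same route as the paper: the same decomposition $J_G=Q_1\cap Q_2$, the same exact sequence~(\ref{eqchordal}) with the regularity bound from \cite[Corollary 18.7]{P}, and the same induction on $n$ applied to the graph $H$ with $S/(Q_1+Q_2)\cong S_i/J_H$. The only (cosmetic) difference is that you extract $d=2$ (equivalently $q=1$) from the clique counts of $G'$, $G''$, $H$ \emph{before} invoking the inductive hypothesis, whereas the paper deduces it \emph{afterwards} from $H$ being a line graph; your write-up also fills in the justification that $r=n-1$ forces $G$ to be a tree, which the paper leaves implicit.
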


\begin{proof}
By the above theorem, we have $\reg(S/J_G)\leq r.$ Hence $G$ has $n-1$ cliques, which implies that $G$ is a tree. By using the sequence~(\ref{eqchordal}) and 
\cite[Corollay 18.7]{P}, it follows that at least one of the following inequalities must hold: 
\begin{itemize}
	\item [(i)] $\reg(S/Q_1)\geq n-1,$
	\item [(ii)] $\reg(S/Q_2)\geq n-1,$
	\item [(iii)] $\reg(S/Q_1+Q_2)\geq n-2.$
\end{itemize}
	The first inequality cannot hold since the graph $G^\prime$ has a smaller number of cliques than $G.$ The second one is also impossible since it would imply that $\reg(S/Q_2)=\reg(S_i/J_{G^{\prime\prime}})\geq n-1$, and $G^{\prime\prime}$ is a graph on $n-1$ vertices. Therefore, we must have (iii). Since 
	$\reg(S/Q_1+Q_2)=\reg(S_i/J_{H})$ where $H$ is obtained form $G'$ by replacing the clique on the vertex set $F_r\union(\Union\limits_{j=1}^q F_{t_j})$ by the clique on the vertex set $F_r\union(\Union\limits_{j=1}^q F_{t_j})\setminus \{i\}$ (see the proof of \cite[Theorem 1.1]{EHH}), by induction on $n$, it follows that $H$ must be a line graph. This implies that the clique on $F_r\union(\Union\limits_{j=1}^q F_{t_j})\setminus \{i\}$ has $2$ vertices. In particular, it follows that  $q=1$ and $G$ is a line graph as well.
\end{proof}

{}


\begin{thebibliography}{}

\bibitem {AR} N. Ay, J. Rauh, {\em Robustness and conditional independence ideals}, (2011)
arXiv:1110.1338



\bibitem {CE} D. A. Cox, A. Erskine, {\em On closed graphs}, arXiv:1306.5149.

\bibitem {CR} M. Crupi, G. Rinaldo, {\em Binomial edge ideals with quadratic Gr\"obner bases}, Electron. J. 
Combin., {\bf 18} (2011), no. 1, \# P211.

\bibitem {EH} V. Ene, J. Herzog, {\em Gr\"obner bases in Commutative Algebra,} Graduate Studies in Mathematics {\bf 130}, American Mathematical Society, 2011.

\bibitem{EHH} V. Ene, J. Herzog, T. Hibi, {\em Cohen-Macaulay binomial edge ideals}, Nagoya Math. J. {\bf 204} (2011),  57--68.

\bibitem{HH10} J. Herzog, T. Hibi, {\em Monomial Ideals}, Graduate Texts in Mathematics \textbf{260}, Springer, 2010.

\bibitem{HHHKR} J. Herzog, T. Hibi, F. Hreinsdotir, T. Kahle, J. Rauh, {\em Binomial edge ideals and conditional independence statements},
 Adv. Appl. Math. \textbf{45} (2010), 317--333.
 
\bibitem{MM} K. Matsuda, S. Murai, {\em Regularity bounds for binomial edge ideals,} J. Commut. Algebra {\bf 5} (2013), 141--149.
 
\bibitem{Oh} M. Ohtani, {\em Graphs and ideals generated by some $2$-minors},  Commun. Algebra {\bf 39} (2011), no. 3,  905--917.

\bibitem{P} I. Peeva, \textit{Graded syzygies}, Algebra and Applications {\bf 14}, Springer, 2011

\bibitem{RR} A. Rauf, G. Rinaldo, {\em Construction of Cohen-Macaulay binomial edge ideals}, arXiv:1205.0475

\bibitem{Sara} S. Saeedi Madani, D. Kiani, {\em Binomial edge ideals of graphs}, Electron. J. 
Combin, {\bf 19} (2012), no. 2, \# P44.

\bibitem{Sara2} S. Saeedi Madani, D. Kiani, {\em On the binomial edge ideal of a pair of graphs}, Electron. J. 
Combin, {\bf 20} (2013), no. 1, \# P48.

\bibitem{SZ} P. Schenzel, S. Zafar, {\em  Algebraic properties of the binomial edge ideal of a complete bipartite graph,} to appear in An. St. Univ. Ovidius Constanta, Ser. Mat.

\bibitem{W} R. Woodroofe, {\em Matching, coverings, and Castelnuovo-Mumford regularity}, Preprint, (2011),
arXiv:1009.2756v2

\bibitem {Z} S. Zafar, {\em On approximately Cohen-Macaulay binomial edge ideal}, Bull. Math. Soc. Sci. Math. Roumanie,
Tome {\bf 55}(103) (2012) No. 4, 429--442.
\end{thebibliography}
\end{document}